\title{Explicit arithmetic intersection theory and computation of N\'eron-Tate heights}
\definecolor{darkgreen}{rgb}{0,0.5,0}
\let\oref\ref
\newcommand*{\doublerightarrow}[2]{\mathrel{
  \settowidth{\@tempdima}{$\scriptstyle#1$}
  \settowidth{\@tempdimb}{$\scriptstyle#2$}
  \ifdim\@tempdimb>\@tempdima \@tempdima=\@tempdimb\fi
  \mathop{\vcenter{
    \offinterlineskip\ialign{\hbox to\dimexpr\@tempdima+1em{##}\cr
    \rightarrowfill\cr\noalign{\kern.5ex}
    \rightarrowfill\cr}}}\limits^{\!#1}_{\!#2}}}
\newcommand*{\triplerightarrow}[1]{\mathrel{
  \settowidth{\@tempdima}{$\scriptstyle#1$}
  \mathop{\vcenter{
    \offinterlineskip\ialign{\hbox to\dimexpr\@tempdima+1em{##}\cr
    \rightarrowfill\cr\noalign{\kern.5ex}
    \rightarrowfill\cr\noalign{\kern.5ex}
    \rightarrowfill\cr}}}\limits^{\!#1}}}
\newcommand{\on}[1]{\operatorname{#1}}
\newcommand{\bb}[1]{{\mathbb{#1}}}
\newcommand{\ca}[1]{{\mathcal{#1}}}
\newcommand{\Span}[1]{\left<#1\right>}
\newcommand{\sub}{\subseteq}
\newcommand\Q{\mathbb{Q}}
\newcommand\C{\mathbb{C}}
\newcommand\F{\mathbb{F}}
\newcommand\Z{\mathbb{Z}}
\newcommand\R{\mathbb{R}}
\newcommand\Gal{\mathop{\rm Gal}\nolimits}
\newcommand\supp{\mathop{\rm supp}\nolimits}
\newcommand\tors{\mathop{\rm tors}\nolimits}
\newcommand{\Div}{\operatorname{Div}}
\renewcommand{\div}{\operatorname{div}}
\newcommand{\GL}{\operatorname{GL}}
\theoremstyle{definition}
\newtheorem{definition}{Definition}[section]
\newtheorem{situation}[definition]{Situation}
\newtheorem{algorithm}[definition]{Algorithm}
\theoremstyle{plain}
\newtheorem{proposition}[definition]{Proposition}
\newtheorem{lemma}[definition]{Lemma}
\newtheorem{theorem}[definition]{Theorem}
\theoremstyle{remark}
\newtheorem{remark}[definition]{Remark}
\renewcommand{\phi}{\varphi}
\author{Raymond van Bommel}
\address{Raymond van Bommel, Mathematisch Instituut, 
Universiteit Leiden,
Postbus 9512,
2300 RA Leiden,
Netherlands}
\author{David Holmes}
\address{David Holmes,
Mathematisch Instituut,
Universiteit Leiden,
Postbus 9512,
2300 RA Leiden,
Netherlands}
\author{J. Steffen M\"uller}
\address{J. Steffen M\"uller,
  Bernoulli Institute, 
  University of Groningen,
  Nijenborgh 9,
  9747 AG Groningen,
  Netherlands
}
\date{\today}
\newcounter{nootje}
\newcommand{\expanded}[1]{#1}
\renewcommand{\expanded}[1]{}
\newcommand{\beq}{\begin{equation}}
\newcommand{\eeq}{\end{equation}}
\newcommand{\beqs}{\begin{equation*}}
\newcommand{\eeqs}{\end{equation*}}
\renewcommand{\Im}{\mathop{\mathrm{Im}}}
\renewcommand{\Re}{\mathop{\mathrm{Re}}}
\begin{document}

\begin{abstract} 
We describe a general algorithm for computing intersection pairings on arithmetic
  surfaces. We have implemented our algorithm for curves over $\bb Q$, and we show how to
  use it to compute regulators for a number of Jacobians of smooth plane quartics, and to
  numerically verify the conjecture of Birch and Swinnerton-Dyer for the Jacobian of the
  split Cartan curve of level~13, up to squares.
\end{abstract}

\maketitle

\newcommand{\Mtildes}{ \widetilde{\ca M}^\Sigma}
\newcommand{\sch}[1]{\textcolor{blue}{#1}}

\section{Introduction}
If $A/K$ is an abelian variety over a global field $K$, then an ample symmetric divisor
class $c$ on $A$ induces a non-degenerate quadratic form $\hat{h}_c$ on $A(K)$, the {\em N\'eron-Tate
height} or {\em canonical height} with respect to $c$. Given $P \in A(K)$, the height of $P$ can be defined as 
\begin{equation*}
\hat{h}_c(P) = \lim_{n \to \infty} \frac{1}{n^{2}}h_c(nP),
\end{equation*}
where $h_c$ is a Weil height on $A$ induced by $c$ (see~\cite{Ner65} and \cite[Section B.5]{HS00}). 
The N\'eron-Tate height also induces a symmetric bilinear pairing on $A(K)$ given by $$\hat{h}_c(P,Q) = \frac12\left(\hat{h}_c(P+Q) - \hat{h}_c(P) - \hat{h}_c(Q)\right).$$

An algorithm to compute the N\'eron-Tate height is required, for instance, to compute
generators of $A(K)$.
 More precisely, the canonical height endows 
 $A(K)\otimes_K \R$ with the structure of a Euclidean vector space and $A(K)/A(K)_{\tors}$
 embeds into this vector space as a lattice $\Lambda$. Given generators of a subgroup of $A(K)/A(K)_{\tors}$ of
 finite index, we can find generators of the full group by saturating the corresponding
 sublattice of $\Lambda$. All known methods for this saturation step require an algorithm to compute the canonical height 
 (see~\cite{Sik95, FS97, Sto02}).  
 Another important application is the computation of the regulator of $A/K$, a quantity which appears in the conjecture of
 Birch and Swinnerton-Dyer. 
 The regulator of $A/K$ is the Gram determinant of a set of generators of $\Lambda$ (for a certain choice of
 $c$). If we only have generators of a finite index subgroup available, then we can still
 compute the regulator up to an integral square factor.

We can construct $\hat{h}_c$ explicitly if we have explicit formulas for a map to projective space corresponding to
the linear system of $c$. For instance, an explicit embedding of the Kummer
variety of $A$ has been used to give algorithms for the computation of N\'eron-Tate
heights for elliptic curves~\cite{Sil88, MS16a} and Jacobians of hyperelliptic curves of genus
2~\cite{FS97, Sto02, MS16b} and genus 3~\cite{Sto17}.
However, this approach becomes quickly infeasible if we increase the dimension of $A$.

But if $J$ is the Jacobian variety of a smooth projective geometrically connected curve $C/K$, then there is an
alternative way due to Faltings and Hriljac to describe the N\'eron-Tate height on $J/K$ with respect to twice 
the class of a symmetric theta divisor as follows (see \ref{S:Gross} for details):
\begin{equation}\label{eq:FH}
  \hat h_{2 \vartheta}([D],[E]) = -\sum_{v \in M_K} \Span{D,E}_v. 
\end{equation}
Here $D$ and $E$ are  two divisors of degree $0$ on $C$ without common component, 
$M_K$ denotes the set of places of $K$, and $\Span{D,E}_v$ denotes the local N\'eron
pairing of $D$ and $E$ at $v$, which is defined below in sections \oref{sec:non_arch} (for the non-archimedean places) and \oref{sec:arch} (for the archimedean places). 

In this note, we show how to turn \ref{eq:FH} into an algorithm for
computing $\hat{h}_{2\vartheta}$ when $K=\Q$ (our algorithm can be generalised easily to work
over general global fields).
This was already done independently by the second-named and the third-named authors
in~\cite{Hol12} and~\cite{Mul14} in
the special case of hyperelliptic curves. 
But for Jacobians of non-hyperelliptic curves, no practical algorithms for
computing N\'eron-Tate heights are known, and therefore no numerical evidence for the Birch and
Swinnerton-Dyer conjecture has been collected.

In the present paper we develop such an algorithm and we 
give numerical evidence for the conjecture of Birch and
Swinnerton-Dyer for a number of Jacobians, including that of the split Cartan modular curve of level 13.
Our main contribution is a new way to compute the non-archimedean
local N\'eron pairings. In fact, we give a new algorithm for computing the intersection
pairing of two divisors without common component on a regular arithmetic surface, which might
be of independent interest.
In short, we lift divisors from the generic fibre to the arithmetic surface by saturating the
defining ideals, and we use an inclusion-exclusion principle to deal with divisors
intersecting on several affine patches. 
The archimedean local N\'eron pairings $\Span{D,E}_\infty$ are computed in essentially the
same way as in in~\cite{Hol12} and~\cite{Mul14}, by pulling back a translate of the
Riemann theta function to $C(\C)$. This requires explicitly computing period matrices and
Abel-Jacobi maps on Riemann surfaces; we use the recent algorithms of Neurohr~\cite[Chapter
4]{NeurohrPhD} and Molin-Neurohr~\cite{MN19}.

The paper is organised as follows: In~\ref{sec:non_arch} we introduce our algorithm to compute non-archimedean
local N\'eron pairings. The computation of archimedean local N\'eron pairings is discussed in
\ref{sec:arch}. The topic of \ref{sec:global} is how to apply these to compute canonical
heights using \ref{eq:FH}. Finally, in~\ref{sec:examples} we demonstrate the practicality of our
algorithm by computing the N\'eron-Tate regulator, up to an integral square, for several
Jacobians of smooth plane quartics including the split (or, equivalently, non-split)
Cartan modular curve of level 13, and we numerically verify BSD for the latter curve up to an
integral square.

\subsection{Acknowledgements}
Most of the work for this paper was done when the authors were participating in the
workshop ``Arithmetic of curves'', held in Baskerville Hall in August~2018. We would like
to thank the organisers Alexander Betts, Tim Dokchitser, Vladimir Dokchitser and C\'eline
Maistret, as well as the Baskerville Hall staff, for providing a great opportunity to concentrate on this project.
We also thank Christian Neurohr for sharing his code to compute Abel-Jacobi maps for
general curves and for answering several questions, and Martin Bright for suggesting the
use of the saturation. 
Finally, we are very grateful to the anonymous referee for a thorough and rapid report. 

\section{The non-archimedean N\'eron pairing}\label{sec:non_arch}
\newcommand{\fp}{\mathfrak p}
\renewcommand{\fp}{p}
\newcommand{\Qp}{\bb Q_{\fp}}
\newcommand{\Zp}{\bb Z_{\fp}}
For simplicity of exposition, we restrict ourselves to curves over the rational numbers;
everything we do generalises without substantial difficulty to global fields. 
For background on arithmetic surfaces and their intersection pairing, we refer to Liu's book
\cite{Liu02}. In this section we work over a fixed prime $\fp$ of $\bb
Z$. Let $C/\Qp$ be a smooth proper geometrically connected curve, and let $\ca C/\Zp$ be a proper
regular model of $C$. Because $\ca C$ is a regular surface, we have an intersection pairing between
divisors on $\ca C$ having no components in common; if $\mathcal{P}$ and $\mathcal{Q}$ are distinct prime divisors the pairing is given by 
\begin{equation*}
  \iota(\mathcal{P} , \mathcal{Q}) = \sum_{P \in  \ca C^0} \on{length}_{\ca O_{\ca C,
  P}}\left(\frac{\ca O_{\ca C, P}}{\ca O_{\ca C, P}(-\mathcal{P}) + \ca O_{\ca C,
  P}(-\mathcal{Q})}\right)\log \# k(P); 
\end{equation*}
here $\ca C^0$ denotes the set of closed points of $\ca C$, and $k(P)$ denotes the residue
field of the point $P$. We extend to arbitrary divisors with no common components by additivity. 

In general, this intersection pairing fails to respect linear equivalence. However, if
$\ca D$ is a divisor on $\ca C$ whose restriction to the generic fibre $C$ has degree $0$,
and $Y$ is a divisor on $\ca C$ pulled back from a divisor on $ \on{Spec} \Zp$, then $\ca
D \cdot Y = 0$. By the usual formalism with a moving lemma, this allows us to define the intersection pairing between any two divisors $\ca D$ and $\ca E$ on $\ca C$ as long as the restrictions of $\ca D$ and $\ca E$ to the generic fibre $C$ have degree $0$ and disjoint support. 

If $D$ is a divisor on $C$, we write $\ca D$ for the unique horizontal divisor on $\ca C$
whose generic fibre is $D$. For a divisor $D$ of degree $0$ on $C$, we write $\Phi(D)$ for
a vertical $\Q$-divisor on $\ca C$ such that for every vertical divisor $Y$ on $\ca C$, we have $\iota(Y , \ca D + \Phi(D)) = 0$; this $\Phi(D)$ always exists, and is unique up to the addition of divisors pulled back from $ \on{Spec} \Zp$ (see~\cite[Theorem~III.3.6]{Lan88}). 

Let $D$ and $E$ be two divisors on $C$, of degree $0$ and with disjoint support. Then the \emph{local N\'eron pairing between $D$ and $E$} is given by
\begin{equation*}
\Span{D,E}_\fp \coloneqq \iota(\ca D +\Phi(D),\ca E +\Phi(E)). 
\end{equation*}
This pairing is bilinear and symmetric, but it does not respect linear equivalence;
see~\cite[Theorem~III.5.2]{Lan88}.

Our goal in this section is to compute the pairing $\Span{D,E}_\fp$, assuming that $D$ and $E$ are given to us (arranging suitable $D$ and $E$, and identifying those primes $\fp$ which may yield a non-zero pairing, will be discussed in \ref{sec:global}). A first step in applying the above definitions is to compute a regular model of $C$ over $\bb Z_\fp$. Algorithms are available for this in {\tt Magma}, one due to Steve Donnelly, and another to Tim Dokchitser \cite{Dok18}. For our examples below we used Donnelly's implementation as slightly more functionality was available, but our emphasis in this section is on providing a general-purpose algorithm which should be easily adapted to take advantage of future developments in the computation of regular models. 

\subsection{The naive intersection pairing}
To facilitate the computation of the local N\'eron pairing at non-archimedean places, we will introduce a \emph{naive} intersection pairing, which coincides with the standard intersection pairing on regular schemes, and then give an algorithm to compute the naive intersection pairing in a fairly general setting. 

\begin{situation}\label{sit:affine}
We fix the following data:
\begin{itemize}
\item
An integral domain $R$ of dimension 2, flat and finitely presented over $\bb Z$;
\item 
effective Weil divisors $\ca D$ and $\ca E$ on $\ca C\coloneqq \on{Spec}R$ with no common irreducible component in their support, defined by the vanishing of ideals $I_{\ca D}$ and $I_{\ca E} $ in $ R$ (i.e. $I_{\ca D} = \ca O_{\ca C}(-\ca D) \sub \ca O_{\ca C}$, and analogously for $\ca E$); 
\item
a constructible subset $V$ of $\ca C$. 
\end{itemize}
\end{situation}
For computational purposes, we suppose that a finite presentation of $R$ is given, along with generators of $I_{\ca D}$ and $I_{\ca E}$. Moreover, we suppose that $V$ is given as a disjoint union of intersections of open and closed subsets. 
\begin{definition}
Let $P$ be a closed point of $\ca C$ lying over $\fp$. The \emph{naive intersection number of $\ca D$ and $\ca E$ at $P$} is given by
\begin{equation*}
\iota^{naive}_P(\ca D,  \ca E) \coloneqq \on{length}_{\ca O_{\ca C, P}}\left( \frac{\ca O_{\ca C, P}}{I_{\ca D,P} + I_{\ca E,P}}\right)\log \# k(P), 
\end{equation*}
  where $I_{D,p} = I_D\otimes {\ca O_{\ca C,P}}$ and likewise for $E$.
If $W$ is any subset of $\ca C$, we define
\begin{equation*}
\iota^{naive}_W(\ca D,  \ca E) \coloneqq \sum_{P \in W^0} \iota^{naive}_P(\ca D,  \ca E), 
\end{equation*}
where $W^0$ denotes the set of closed points in $W$ lying over $\fp$. 
\end{definition}

Note that if $\ca C$ is regular at $P$, then $\iota^{naive}_P(\ca D,  \ca E)$ is the usual intersection pairing $\iota_P(\ca D,  \ca E)$ at $P$. If $W$ and $W'$ are disjoint subsets of $C$, then 
\begin{equation}\iota^{naive}_W(\ca D,  \ca E) + \iota^{naive}_{W'}(\ca D,  \ca E) = \iota^{naive}_{W\cup W'}(\ca D,  \ca E). \label{lem:additivity}
\end{equation} 

We present here an algorithm for computing the naive intersection pairing $\iota^{naive}_V(\ca D,  \ca E)$ for $V$ any constructible subset of $\ca C$. This seems to us a reasonable level of generality to work in; constructible subsets are the most general subsets easily described by a finite amount of data, and should be flexible enough for computing local N\'eron pairings for any reasonable way a regular model is given to us. Note that only being able to compute the intersection pairing at points would not be sufficient, as we would then need to sum over infinitely many points, and only being able to compute it for $V$ affine gives complications where patches of the model overlap. 

\begin{algorithm}\label{alg:NA}
Suppose we are in \ref{sit:affine}. The following is an algorithm to compute $\iota^{naive}_V(\ca D,  \ca E)$. 
\end{algorithm}

\textbf{First reduction step:} By \ref{lem:additivity} we may assume $V$ is locally closed. 

\textbf{Second reduction step:} Write $V = Z_1 \setminus Z_2$ with $Z_2 \sub Z_1$ closed, then by \ref{lem:additivity} we have
\begin{equation*}
\iota^{naive}_V(\ca D,  \ca E) = \iota^{naive}_{Z_1}(\ca D,  \ca E) - \iota^{naive}_{Z_2}(\ca D,  \ca E), 
\end{equation*}
So we may assume $V$ is closed. 

\textbf{Third reduction step:} Write $V = Z(f_1, \dots, f_r)$, with $f_i \in R$. For a
subset $T \sub \{1, \dots, r\}$ define $S_T = \on{Spec} \left((\prod_{i \in T} f_i)^{-1} R\right)$. Then by inclusion-exclusion we have 
\begin{equation*}
\iota^{naive}_V(\ca D,  \ca E) = \sum_{T \sub \{1, \dots, r\}} (-1)^{\# T}
  \iota^{naive}_{S_T} (\ca D,  \ca E). 
\end{equation*}
Since $S_T$ is affine, we are reduced to the case where $V$ is the whole of $\ca C = \on{Spec} R$. 

\textbf{Concluding the algorithm:} 
Since forming quotients commutes with flat base-change, we obtain
\begin{equation*}
\iota_{\ca C}^{naive}(\ca D,  \ca E) = \on{length}_R \left( \frac{R\otimes_{\bb Z} \bb Z_\fp}{I_{\ca D} \otimes_{\bb Z} \bb Z_\fp + I_{\ca E} \otimes_{\bb Z} \bb Z_\fp} \right)\log \#k(\fp). 
\end{equation*}
This can be computed using \cite[Algorithm 1]{Mul14}. For efficiency we compute this length working modulo a sufficiently large power of $\fp$, which will be determined in \ref{rem:precision}. 

\begin{remark}
  Note that the third reduction step is exponential in $r$. In the examples we've
  computed, the largest value of $r$ was~4.
\end{remark}

\subsection{Computing the intersection pairing}\label{sec:intersection_pairing}

Let $C/\bb Q_\fp$ be a smooth projective curve, $\ca C/\bb Z_\fp$ a regular model, and $\ca
D$, $\ca E$ two divisors on $\ca C$ without common component. In this section, we describe several approaches to computing the intersection pairing $\iota(\ca D, \ca E)$, depending on how $\ca C$ is given to us. 

\subsubsection*{Regular model given by affine charts and glueing data}
Suppose that the regular model $\ca C$ is given as a list of affine charts $C_1, \dots, C_n$ and glueing data. We partition $\ca C$ into constructible subsets $V_i$ by, for each $i \in \{ 1, \dots, n\}$, setting $V_i = C_i\setminus (\cup_{j < i} C_j)$. Then the intersection pairing is given by 
\begin{equation*}
\iota(\ca D, \ca E) = \sum_{i \in \{ 1, \dots, n\}} \iota^{naive}_{V_i}(\ca D, \ca E). 
\end{equation*}

\subsubsection*{Regular model as described by {\tt Magma}}

{\tt Magma}'s regular models implementation (due to Steve Donnelly) describes the model $\ca C$ in a slightly different way. It constructs a regular model by repeatedly blowing up non-regular points and/or components in a proper model. In this way, it creates a list of affine patches $U_i$ together with open immersions from the generic fibre of the $U_i$ to $C$. For each $i$, it stores a constructible subset $V_i \sub U_i$, consisting of all regular points in the special fibre which did not appear in any of the previous affine patches. These $V_i$ form a constructible partition of the special fibre of a regular model. In this case, we simply compute 
\begin{equation*}
\iota(\ca D, \ca E) = \sum_{i \in \{ 1, \dots, n\}} \iota^{naive}_{V_i}(\ca D, \ca E). 
\end{equation*}

\subsection{Computing the non-archimedean local N\'eron pairing}

Let $C/\bb Q_\fp$ be a smooth projective curve, $\ca C/\bb Z_\fp$ a regular model, $D$ and $E$ degree $0$ divisors on $C$ with disjoint support. In this section we will describe how to compute the local N\'eron pairing $\Span{D, E}_\fp$. 

First we compute the extensions of $D$ and $E$ to horizontal divisors $\ca D$ and $\ca E$ on $\ca C$. We break $D$ and $E$ into their effective and anti-effective parts, then choose some extensions of these ideals to $\ca C$ (the associated subschemes may contain many vertical components). We then saturate these ideals with respect to the prime $\fp$ to obtain (ideals for) horizontal divisors. This works by the following well-known lemma. 

\begin{lemma}\label{lem:saturation}
Let $R$ be a $\bb Z$-algebra, and $I$ an ideal of $R$. The ideal sheaf of the schematic image of $ \on{Spec} R[1/\fp]/ (I \otimes_R R[1/\fp])$ in $ \on{Spec} R$ is given by the saturation 
\begin{equation*}
(I:\fp^\infty) = \{r \in R : \exists n : \fp^n r \in I\}. 
\end{equation*}
\end{lemma}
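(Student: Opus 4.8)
The plan is to identify the schematic image with the closed subscheme cut out by the saturation ideal, working affine-locally and using the universal property of schematic image (it is the smallest closed subscheme through which the given morphism factors; see \cite[Tag~01R8]{stacks-project} or \cite{Liu02}). Write $A = R$, $U = \on{Spec} A[1/\fp]$, and $j\colon Z \coloneqq \on{Spec} A[1/\fp]/(IA[1/\fp]) \to \on{Spec} A$ for the composite of the closed immersion $Z \hookrightarrow U$ with the open immersion $U \hookrightarrow \on{Spec} A$. Since $\on{Spec} A$ is affine and $j$ is a morphism from a quasi-compact scheme, the schematic image is the closed subscheme associated to the kernel of the ring map $A \to \Gamma(Z, \ca O_Z) = (A/I)[1/\fp]$, obtained by composing $A \to A[1/\fp] \to (A[1/\fp])/(IA[1/\fp]) = (A/I)[1/\fp]$.

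First I would unwind what this kernel is. An element $r \in A$ maps to $0$ in $(A/I)[1/\fp]$ precisely when the image of $r$ in $A/I$ is killed by some power of $\fp$, i.e. when $\fp^n r \in I$ for some $n \geq 0$; this is exactly the defining condition for $r \in (I : \fp^\infty)$. Hence the kernel of $A \to (A/I)[1/\fp]$ equals the saturation $(I : \fp^\infty)$, and therefore the schematic image is $\on{Spec} A/(I:\fp^\infty)$, which is what the lemma asserts. One routine point to check along the way is that localisation is exact, so that $(A/I)[1/\fp]$ really is the global sections of $Z$ and that the displayed composition of ring maps computes the section ring of the schematic image; this is where flatness of $A[1/\fp]$ over $A$ enters, exactly as already invoked in the ``Concluding the algorithm'' step above.

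The only genuinely delicate point is making sure the schematic image is computed by global sections rather than by a more subtle sheaf-theoretic construction: in general the schematic image need not commute with restriction to opens, and the scheme-theoretic image of a non-quasi-compact morphism can fail to be cut out by the naive kernel. Here this is not an issue because $Z$ is quasi-compact (indeed Noetherian, as $R$ is finitely presented over $\bb Z$) and the target is affine, so \cite[Tag~01R8]{stacks-project} applies directly and the schematic image is the spectrum of the image of $\Gamma(\on{Spec} A, \ca O) \to \Gamma(Z, \ca O_Z)$. I would state this reduction explicitly, then conclude with the one-line computation of the kernel above. Since the statement is standard, I would keep the write-up short, citing Liu or the Stacks Project for the characterisation of schematic image and spending the bulk of the (brief) argument on the identification of the kernel with $(I:\fp^\infty)$.
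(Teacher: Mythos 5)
Your proposal is correct and in substance coincides with the paper's proof: both arguments come down to the observation that $r\in R$ lands in $I\otimes_R R[1/p]$ (equivalently, dies in $(R/I)[1/p]$) exactly when $p^n r\in I$ for some $n$. The only difference is presentational — the paper verifies the universal property of the schematic image directly, by showing $(I:p^\infty)$ is the largest ideal $J$ with $J\otimes_R R[1/p]=I\otimes_R R[1/p]$, whereas you route through the kernel of $R\to\Gamma(Z,\ca O_Z)$ for the quasi-compact morphism $Z\to\on{Spec}R$; note only that your parenthetical ``Noetherian since $R$ is finitely presented over $\Z$'' imports a hypothesis not in the lemma (the lemma assumes only that $R$ is a $\Z$-algebra), but this is harmless since $Z$ is affine and hence quasi-compact regardless.
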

\begin{proof}
It is immediate that $(I:\fp^\infty)\otimes_R R[1/\fp] = I\otimes_R R[1/\fp]$. We need to check that, for any ideal $J \triangleleft R$ with $J\otimes_R R[1/\fp] = I\otimes_R R[1/\fp]$, we have $J \subseteq (I:\fp^\infty)$. Indeed, if $j \in J$ then we can write ${j}$ as a finite sum of elements $\frac{i}{\fp^{n_i}}$ with $i \in I$, $n_i \in \bb N$, so $\fp^{\max_i n_i}j \in I$, as required. 
\end{proof}

To compute the vertical correction term $\Phi(D)$, we use the algorithm from
\ref{sec:intersection_pairing} to compute the intersection of $\ca D$ with every component
of the fibre of $\ca C$ over $\fp$, then apply simple linear algebra as in \cite[\S
4.5]{Mul14} to find the coefficients of $\Phi(D)$. 

Finally, we use again the algorithm in \ref{sec:intersection_pairing} to compute
\begin{equation*}
\Span{D, E}_\fp = \iota(\ca D +\Phi(D),\ca E +\Phi(E)) = \iota(\ca D, \ca E) + \iota(\Phi(D),\ca E). 
\end{equation*}

\section{The archimedean N\'eron pairing}\label{sec:arch}
\subsection{Green's functions; definition of the pairing}
Let $C/\bb C$ be a smooth projective connected curve of genus $g$, and $\phi$ be a volume form on $C$. If $E$ is a divisor on $C$, we write 
\begin{equation*}
  g_{E,\phi}\colon C(\C) \setminus \supp(E) \to \bb R
\end{equation*}
for a Green's function on $C(\C)$ with respect to $E$ (see \cite[II, \S 1]{Lan88}). If $E$ has degree $0$, and $\phi'$ is another volume form, then $g_{E, \phi} - g_{E, \phi'}$ is constant. If $D = \sum_P n_P P$ is another divisor of degree $0$ with support disjoint from $E$, then the \emph{local N\'eron pairing} is given by 
\begin{equation*}
\Span{D,E}_{\infty} \coloneqq \sum_P n_Pg_{E,\phi}(P);
\end{equation*}
this pairing is bilinear and symmetric, and is independent of the choice of
$\phi$, see~\cite[Theorem~III.5.3]{Lan88}.
As we evaluate $g_{E,\phi}$ in a divisor of degree $0$, we can replace $g_{E,\phi}$ by
$g_{E,\phi}+c$ for a constant $c \in \R$ without changing $\Span{D,E}_{\infty}$.

\subsection{Theta functions; a formula for the pairing}\label{sec:theta}
Let $\{\omega_1,\ldots,\omega_g\}$ be an orthonormal basis of $H^0(C, \Omega^1)$ with
respect to the scalar product $(\omega,\eta)\mapsto\frac{i}{2}\int_{C(\C)}
\omega\wedge \bar{\eta}$ and
let $\varphi \coloneqq \frac{i}{2g}(\omega_1\wedge\bar{\omega_1}+\ldots+\omega_g\wedge\bar{\omega_g})$
be the canonical volume form. We fix  a base point $P_0 \in C(\C)$ and denote by $\alpha:C(\C) \to
J(\C)$ the Abel-Jacobi map
with respect to $P_0$. By abuse of notation, we also denote the additive extension of $\alpha$
to divisors on $C$ by $\alpha$. Following Hriljac, we construct a Green's function by
pulling back the logarithm of a translate of the Riemann theta function $\theta$ along
$\alpha$. Let  $\tau\in \C^{g\times g}$ be the small period matrix of $J(\C)$; it has
symmetric positive definite imaginary part and satisfies $J(\C) \cong \C^g/(\Z^g+\tau\Z^g)$. We 
define 
\[\xymatrix{
  j:\C^g\ar@{->>}[r]&\C^g/(\Z^g+\tau\Z^g)\ar[r]^{\qquad \simeq} &J(\C),}
\]
  Let $\Theta$ denote the theta divisor on $J$ corresponding to $\alpha$.
 By a theorem of
  Riemann (see \cite[Theorem~13.4.1]{Lan83}), there exists a divisor $W$ on $C$ such that
  $2W$ is canonical and such that the translate
  $\Theta_{-{\alpha(W)}}$ of $\Theta$ by $-\alpha(W)$ is the
  divisor of the normalised (in the notation of~\cite[\S13.1]{Lan83}) version of the Riemann theta function
\begin{equation}\label{eq:norm_theta}
 F_{\Theta_{-\alpha(W)}}(z) \coloneqq \theta(z,\tau)\exp\left(\frac{\pi}{2}z^T(\Im
  \tau)^{-1}z\right).
\end{equation}
This $W$ is in fact unique up to linear equivalence, by \cite[Chapter II, theorem 3.10]{Tata1}. 

For the remainder of this section, we suppose that $E=E_1-E_2$, where $E_1$ and $E_2$ are
{\em non-special}. This means that they are effective of degree $g$ with $h^0(C, \ca O(E_i)) = 1$. Because of the bilinearity of the N\'eron pairing, the
following gives a formula to compute $\Span{D,E}_{\infty}$ for all $D\in \Div^0(C)$ with
support disjoint from $E$.

\begin{proposition}\label{P:gfformula}
  Suppose that $D=P_1-P_2$ with $P_1,\, P_2 \in C(\C)$, not in the support of $E$. Then
  \[
    \Span{D,E}_{\infty} = -\log\left|\frac{
      \theta(z_{11},\tau)\cdot\theta(z_{22},\tau)}{\theta(z_{12},\tau)\cdot\theta(z_{21},\tau)}\right|
    -2\pi\Im (z_E)^T\Im (\tau)^{-1}\Im (z_D)
  \]
where $z_D, z_E, z_{ij} \in \C^g$ satisfy $j(z_D) = \alpha(D)$, $j(z_E) = \alpha(E)$ and
  $j(z_{ij}) = \alpha(P_i-E_j+W)$.
\end{proposition}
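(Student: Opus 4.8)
The plan is to realise the left-hand side as the value at $D = P_{1} - P_{2}$ of the Green's function for $E$ built, following Hriljac as indicated above, from a translate of $\theta$, and then to rewrite that value in terms of $\theta$ itself rather than its translation-invariant norm; the extra term in the formula is precisely what this rewriting produces.

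First I would fix notation for the translation-invariant norm of $\theta$: for $z \in \C^{g}$ representing a point of $J(\C)$, put
\begin{equation*}
  \|\theta\|(z) \coloneqq (\det \Im \tau)^{1/4}\exp\bigl(-\pi\,(\Im z)^{T} (\Im\tau)^{-1}(\Im z)\bigr)\,\bigl|\theta(z,\tau)\bigr| .
\end{equation*}
The automorphy factor of $\theta$ is cancelled by the exponential factor, so $\|\theta\|$ descends to a function on $J(\C)$ whose divisor of zeros is $\Theta_{-\alpha(W)}$, by \ref{eq:norm_theta} and the choice of $W$. I would then consider the function on $C(\C)$
\begin{equation*}
  g_{E}(x) \coloneqq \log\|\theta\|\bigl(\alpha(x - E_{2} + W)\bigr) - \log\|\theta\|\bigl(\alpha(x - E_{1} + W)\bigr) .
\end{equation*}
Since $x \mapsto \alpha(x - E_{j} + W)$ is the Abel--Jacobi map followed by a translation and $h^{0}(C, \O(E_{j})) = 1$, the theorem of Riemann recalled above shows that its pullback of $\Theta_{-\alpha(W)}$ is exactly the divisor $E_{j}$; this is the only place the non-speciality of $E_{1}$ and $E_{2}$ enters. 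Hence $g_{E}$ is smooth off $\supp(E)$ with the logarithmic singularities prescribed by $E = E_{1} - E_{2}$, and, because $x \mapsto \alpha(x - E_{1} + W)$ and $x \mapsto \alpha(x - E_{2} + W)$ differ by a constant translation on $J(\C)$, the smooth $(1,1)$-forms pulled back along them from the invariant form representing the principal polarisation coincide and cancel, so $dd^{c} g_{E} = -\delta_{E}$. Thus $g_{E}$ agrees with the Green's function $g_{E,\varphi}$ up to an additive constant (for a divisor of degree $0$ the constant and the choice of volume form do not matter, as recalled before the statement), and therefore $\Span{D,E}_{\infty} = g_{E}(P_{1}) - g_{E}(P_{2})$.

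Next I would substitute the definition of $\|\theta\|$ at the four points $u_{ij} \coloneqq \alpha(P_{i} - E_{j} + W)$. A short rearrangement gives $g_{E}(P_{1}) - g_{E}(P_{2}) = -\log\frac{\|\theta\|(u_{11})\,\|\theta\|(u_{22})}{\|\theta\|(u_{12})\,\|\theta\|(u_{21})}$. Choosing representatives $z_{ij} \in \C^{g}$ of the $u_{ij}$ coming from one common choice of representatives of $\alpha(P_{1}), \alpha(P_{2}), \alpha(E_{1}), \alpha(E_{2}), \alpha(W)$ --- so that $\Im z_{11} + \Im z_{22} = \Im z_{12} + \Im z_{21}$, and $z_{D} \coloneqq z_{11} - z_{21}$, $z_{E} \coloneqq z_{12} - z_{11}$ represent $\alpha(D)$ and $\alpha(E)$ --- the factors $(\det \Im\tau)^{1/4}$ cancel and one obtains
\begin{equation*}
  \Span{D,E}_{\infty} = -\log\left|\frac{\theta(z_{11},\tau)\,\theta(z_{22},\tau)}{\theta(z_{12},\tau)\,\theta(z_{21},\tau)}\right| + \pi Q ,
\end{equation*}
where $Q \coloneqq \sum_{i,j \in \{1,2\}} (-1)^{i+j}(\Im z_{ij})^{T} (\Im\tau)^{-1}(\Im z_{ij})$. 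Substituting $\Im z_{12} = \Im z_{11} + \Im z_{E}$, $\Im z_{21} = \Im z_{11} - \Im z_{D}$ and $\Im z_{22} = \Im z_{11} + \Im z_{E} - \Im z_{D}$ and expanding, every term except the $\Im(z_{E})$--$\Im(z_{D})$ cross term cancels, leaving $Q = -2\,(\Im z_{E})^{T} (\Im\tau)^{-1}(\Im z_{D})$; hence $\pi Q = -2\pi\,\Im(z_{E})^{T}\Im(\tau)^{-1}\Im(z_{D})$, which is exactly the asserted identity.

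The step I expect to need the most care is verifying that the right-hand side is well defined: it must not depend on the representative chosen for $\alpha(W)$, nor on the common choice of representatives at all. Both reduce to the single relation $\Im z_{11} + \Im z_{22} = \Im z_{12} + \Im z_{21}$, which makes the coefficient of any common lattice translation vanish simultaneously in the $\log|\theta|$-term (through the automorphy factor of $\theta$) and in $Q$; this relation is also what dictates the cross-ratio arrangement of the four theta values and the pairing of $E_{1}$ with $P_{1}$ and of $E_{2}$ with $P_{2}$. Besides this, the only remaining work is bookkeeping --- getting the overall sign and the roles of $E_{1}$ versus $E_{2}$ right against Lang's conventions for Green's functions and for the local N\'eron pairing (\cite[Theorem~III.5.3]{Lan88}) --- which is routine but is where sign errors would most readily creep in.
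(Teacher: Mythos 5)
Your argument is correct and follows essentially the same route as the paper's: both proofs construct the Green's function for $E_j$ by pulling back (the norm of) the normalised translate of the Riemann theta function along $x\mapsto \alpha(x-E_j+W)$, with non-speciality entering in exactly the same place, namely to guarantee via Riemann's theorem that this pullback divisor is precisely $E_j$ (cf.\ \ref{R:special_is_bad}). The one substantive difference is how the pullback is identified with the Green's function: the paper works with Lang's N\'eron functions on $J(\C)$ (\ref{lem:nf1}, properties (NF1)--(NF3)) and invokes Hriljac's theorem \cite[Theorem~13.5.2]{Lan83} for each non-special $E_j$ separately, whereas you verify $dd^c g_E=-\delta_E$ directly for the degree-zero combination $E=E_1-E_2$, using translation-invariance of the invariant $(1,1)$-form to cancel the smooth parts; this is marginally more self-contained and sidesteps the canonical volume form $\varphi$ entirely, at the cost of a current-theoretic computation the paper outsources to Lang. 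You also spell out the final expansion of the quadratic terms yielding $-2\pi\Im(z_E)^T\Im(\tau)^{-1}\Im(z_D)$ and the independence of the formula from the chosen lifts $z_{ij}$, $z_D$, $z_E$, both of which the paper leaves implicit; these computations check out, including the signs.
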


For the proof of~\ref{P:gfformula} we need the notion of a N\'eron function on
  $J(\C)$, see~\cite[\S13.1]{Lan83}. For each divisor $A \in \Div(J)$, there is {\em a N\'eron function with respect to $A$}, which is uniquely determined up to adding a constant. This is a continuous function $\lambda_A : J(\C) \setminus \supp(A) \to \R$, and together they have the following properties:
  \begin{enumerate}[label=(NF{\arabic*})]
    \item if $A,B \in \Div(J)$, then $\lambda_{A+B} - \lambda_{A} - \lambda_{B}$ is
      constant;
    \item if $f \in \C(J)$, then $\lambda_{\div(f)} +\log|f| $ is constant;
    \item if $A \in \Div(J)$ and $Q \in J(\C)$,
      then $P \mapsto\lambda_{A_Q}(P) - \lambda_{A}(P-Q)$ is constant.
  \end{enumerate}

  A result of N\'eron lets us
 express the N\'eron function of a divisor in terms of
  the  normalised theta function associated to that divisor. In particular, we find:
\begin{lemma}\label{lem:nf1} 
  We get a N\'eron function with respect to ${\Theta_{-\alpha(W)}}$ by mapping $P\in
  J(\C)$ to
  \[
    \lambda_{\Theta_{-\alpha(W)}}(P)
  \colonequals -\log|\theta(z, \tau)|+ \pi \Im(z) ^T \Im(\tau)^{-1} \Im(z),
\]
where $z\in\C^g$ is such that $j(z) = P$.
\end{lemma}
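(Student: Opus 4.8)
The plan is to derive the formula from the classical description of N\'eron functions on a complex abelian variety via normalised theta functions — the ``result of N\'eron'' recalled just before the statement (see \cite[\S13.1]{Lan83}). That description attaches to a divisor $A\in\Div(J)$ its normalised theta function $F_A$ on $\C^g$, characterised by $\div(F_A)=j^*A$ and a prescribed automorphy factor, and it states that
\[
 P\longmapsto -\log|F_A(z)| + \tfrac{\pi}{2}\,z^T\Im(\tau)^{-1}\bar z\qquad(j(z)=P)
\]
is a N\'eron function with respect to $A$; here the quadratic term is minus the logarithm of the canonical hermitian metric on $\ca O_J(A)$ and is precisely what makes the expression invariant under translation by $\Z^g+\tau\Z^g$. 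I would apply this with $A=\Theta_{-\alpha(W)}$: by the theorem of Riemann invoked above together with \eqref{eq:norm_theta}, the normalised theta function attached to $\Theta_{-\alpha(W)}$ is $F_{\Theta_{-\alpha(W)}}(z)=\theta(z,\tau)\exp(\tfrac{\pi}{2}z^T\Im(\tau)^{-1}z)$.

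The rest is simplification. Put $S\coloneqq\Im(\tau)^{-1}$ and write $z=\Re(z)+i\Im(z)$; then $\Re(z^TSz)=\Re(z)^TS\Re(z)-\Im(z)^TS\Im(z)$, while $z^TS\bar z=\Re(z)^TS\Re(z)+\Im(z)^TS\Im(z)$ is real. Since $|F_{\Theta_{-\alpha(W)}}(z)|=|\theta(z,\tau)|\exp(\tfrac{\pi}{2}\Re(z^TSz))$, the $\Re(z)$-terms cancel and
\[
 -\log|F_{\Theta_{-\alpha(W)}}(z)| + \tfrac{\pi}{2}\,z^TS\bar z
 = -\log|\theta(z,\tau)| + \pi\,\Im(z)^T\Im(\tau)^{-1}\Im(z),
\]
which is exactly the asserted formula.

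For a self-contained verification — and as a safeguard against normalisation conventions — I would also confirm directly that the right-hand side is a N\'eron function with respect to $\Theta_{-\alpha(W)}$, i.e.\ that it is a well-defined function on $J(\C)\setminus\supp(\Theta_{-\alpha(W)})$ with the correct logarithmic singularity along $\Theta_{-\alpha(W)}$. Well-definedness follows by replacing a lift $z$ of $P$ by $z+m+\tau n$ with $m,n\in\Z^g$: the quasi-periodicity $\theta(z+m+\tau n,\tau)=\theta(z,\tau)\exp(-\pi i\,n^T\tau n-2\pi i\,n^Tz)$ contributes $-\pi\,n^T\Im(\tau)n-2\pi\,n^T\Im(z)$ to $-\log|\theta|$, and, using $\Im(z+m+\tau n)=\Im(z)+\Im(\tau)n$, these cancel the cross term and the $n$-quadratic term in the expansion of $\pi(\Im(z)+\Im(\tau)n)^T\Im(\tau)^{-1}(\Im(z)+\Im(\tau)n)$, leaving the value unchanged. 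The singularity along $\Theta_{-\alpha(W)}$ is as required because, by Riemann's theorem, $\theta(\cdot,\tau)$ is holomorphic on $\C^g$ with $\div(\theta(\cdot,\tau))=j^*\Theta_{-\alpha(W)}$, so $-\log|\theta(z,\tau)|$ differs locally from $-\log|\phi|$ by a continuous function for $\phi$ a local equation of $\Theta_{-\alpha(W)}$, while the added term $\pi\Im(z)^T\Im(\tau)^{-1}\Im(z)$ is real-analytic. Properties (NF1)--(NF3) concern the whole family of N\'eron functions and need not be re-verified for this single divisor.

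The only genuine difficulty is bookkeeping: one must pin down the conventions of \cite[\S13.1]{Lan83} precisely enough that the cited construction yields exactly the exponential factor of \eqref{eq:norm_theta} and, after the simplification above, the coefficient $\pi$ — and not some other multiple — in front of $\Im(z)^T\Im(\tau)^{-1}\Im(z)$. Beyond this, only the elementary quasi-periodicity computation is involved.
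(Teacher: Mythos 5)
Your proposal is correct and follows essentially the same route as the paper: both invoke N\'eron's theorem (Lang, \S13.1) with the normalised theta function $F_{\Theta_{-\alpha(W)}}$ of \eqref{eq:norm_theta} and the Hermitian form $H(z,w)=z^T\Im(\tau)^{-1}\bar w$ (which the paper pins down via \cite[Proposition 13.3.1]{Lan83}), and then carry out the identical real-part simplification to obtain $-\log|\theta(z,\tau)|+\pi\Im(z)^T\Im(\tau)^{-1}\Im(z)$. Your additional direct check of well-definedness via the quasi-periodicity of $\theta$ is correct but not present in (or needed for) the paper's argument.
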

\begin{proof}
Let $H$ denote the Hermitian form with
  matrix $\Im(\tau)^{-1}$; by~\cite[Proposition 13.3.1]{Lan83} this is the Hermitian form (in the language
  of~\cite[\S13.1]{Lan83}) of the divisor ${\Theta_{-\alpha(W)}}$. Because of N\'eron's
  theorem (see~\cite[Theorem~13.1.1]{Lan83})  and because 
  of~\ref{eq:norm_theta}, we get a N\'eron function by mapping $P \in J(\C)$ to
\begin{align*}
  \lambda_{\Theta_{-\alpha(W)}}(P)& \colonequals -\log|F_{\Theta_{-\alpha(W)}}(z)|
  +\frac{\pi}{2}H(z,z)  \\   
  & =-\log| \theta(z,\tau)|- \log \left|\exp\left(\frac{\pi}{2}z^T(\Im
  \tau)^{-1}z\right)\right| + \frac{\pi}{2} z^T\Im(\tau)^{-1}\bar{z}\\
  &= -\log| \theta(z,\tau)|- \frac{\pi}{2}\left(\Re(z)^T(\Im \tau)^{-1}\Re(z) - \Im(z)^T(\Im
  \tau)^{-1}\Im(z)\right) + \frac{\pi}{2} z^T\Im(\tau)^{-1}\bar{z}\\
  &=-\log|\theta(z, \tau)|+ \pi \Im(z) ^T \Im(\tau)^{-1} \Im(z),
 \end{align*}
where $z\in\C^g$ is such that $j(z) = P$.
\end{proof}

\begin{proof}[Proof of~\ref{P:gfformula}]
Let $\Theta^{-} = [-1]^*\Theta$.
  We first find a N\'eron function for $\Theta^{-}_{\alpha(E_j)}$, where $j \in\{1,2\}$.
  Since we have
  \[\Theta^- = \Theta_{-\alpha(2W)}\] by \cite[Theorem~5.5.8]{Lan83},
  property (NF3) implies that 
  \begin{equation}\label{E:neron_Theta}
   \lambda_{j}(P) \coloneqq \lambda_{\Theta_{-\alpha(W)}}(P - \alpha(E_j) + \alpha(W))
  \end{equation}
  is a N\'eron function with respect to $\Theta^{-}_{\alpha(E_j)}$, where
  $\lambda_{\Theta_{-\alpha(W)}}$ is as in~\ref{lem:nf1}.

  Since $E_j$ is non-special, a result of Hriljac (see~\cite[Theorem~13.5.2]{Lan83})
  implies that
  \begin{equation}\label{E:green_neron}
    g_{E_j,\varphi} = \lambda_{j}\circ\alpha + c_j
  \end{equation}
  for some constant $c_j\in \R$. 

  Using~\ref{E:green_neron}, \ref{E:neron_Theta} and~\ref{lem:nf1}, we conclude that
  \begin{align*}
    g_{E_j,\varphi}(P_i) 
    &= \lambda_{\Theta_{-\alpha(W)}}(\alpha(P_i) - \alpha(E_j) +\alpha(W))+c_j \\
    &=  -\log|\theta(z_{ij},\tau)|+\pi \Im(z_{ij}) ^T \Im(\tau)^{-1} \Im(z_{ij})+c_j.
  \end{align*}
  The result now follows from
  \begin{equation*}\label{}
    g_{E,\varphi}(D) = g_{E_1,\varphi}(P_1) - g_{E_2,\varphi}(P_1) - g_{E_1,\varphi}(P_2) + g_{E_2,\varphi}(P_2).
  \end{equation*}
  and the definition of the local N\'eron pairing.
 \end{proof}

\begin{remark}\label{R:steffen_messed_up}
  In~\cite[Corollary~4.16]{Mul14} and~\cite[\S7.3]{Hol12}
  equivalent formulas for $\Span{D,E}_\infty$ were given for the special case of hyperelliptic curves.
  Our \ref{P:gfformula} implies those results, if we use a Weierstrass point as the base
  point for the Abel-Jacobi map; in this case $\alpha(W)=0$. Note that \cite[Corollary~4.16]{Mul14}
  is stated without the assumption that the curve is hyperelliptic, but is false in
  general. We have adapted and corrected the proof given there. Alternatively, one could
  also generalise the proof in~\cite[\S7]{Hol12}.
\end{remark}

\begin{remark}\label{R:special_is_bad}
  In the proof of~\ref{P:gfformula} the condition that $E_1$ and $E_2$ are non-special is
  only used to apply Hriljac's theorem which constructs the Green's function on $C$ by
  pulling back a N\'eron function on $J$ along the Abel-Jacobi map. If the divisor $E_j$
  is non-special, then the intersection of the translate of $\Theta^{-}$ by $\alpha(E_j)$ with the
  curve $C$ recovers the divisor $E_j$ (see \cite[Theorem~5.5.8]{Lan83}), hence we can pull back a
  N\'eron function with respect to $\Theta^{-}_{\alpha(E_j)}$ to obtain a Green's function for the divisor $E_j$. In contrast, if the
  divisor $E_j$ is special then this intersection can (set-theoretically) be much larger,
  so pulling back a N\'eron function does not give anything meaningful.
  Indeed, we have found examples where \ref{P:gfformula} is false
  for special $E_1$ and $E_2$.
\end{remark}

\subsection{Computing the archimedean local N\'eron pairing}
To compute $\Span{D,E}_{\infty}$,  we use the {\tt Magma} code
written by Christian Neurohr for the computation of the small period matrix $\tau$ associated to
$C(\C)$ and the Abel-Jacobi map $\alpha$. See Neurohr's thesis~\cite{NeurohrPhD} for a
description of the algorithm.
This code makes it possible to numerically
approximate these objects efficiently to any desired precision. If $C$ is superelliptic,
then we instead use Neurohr's implementation of the specialised algorithms of
Molin-Neurohr~\cite{MN19} (\url{https://github.com/pascalmolin/hcperiods}). The code requires as input a (possibly singular) plane
model of $C$; this is easy to produce in practice, for instance via projection or
by computing a primitive element of the function field of $C$.

The Riemann theta function can be computed using code already contained in {\tt Magma}.
It is also necessary to find the divisor $W$ in~\ref{P:gfformula}. We first compute 
a canonical divisor and its image under $\alpha$. Then we 
run through all preimages under multiplication by~2 in $\C^g/(\Z^g\oplus\tau\Z^g)$ until we
find the correct $W$ so that $\Theta_{-\alpha(W)}$ is the divisor of the
normalised Riemann theta function, see \ref{sec:tors}.
Once we have the correct $\alpha(W)$, we can compute $\Span{D,E}_\infty$ easily via~\ref{P:gfformula}. 

\begin{remark}
The implementation of Molin-Neurohr and the computation of theta functions in Magma are
rigorous, which means that for superelliptic curves our algorithm returns a provably correct result to any desired
precision, if we disregard possible precision loss. To handle the latter, one would have to
  use interval or ball arithmetic, as implemented, for instance, in {\tt Arb}~\cite{Joh17}. Indeed, Molin and
  Neurohr have 
implemented their algorithms in {\tt Arb}, but we have not
attempted to use this.
In contrast, Neurohr's {\tt Magma}-implementation of his algorithms for
more general curves does not currently yield provably correct output, see the discussion
in~\cite[Section~4.10]{NeurohrPhD}.
\end{remark}

\section{The global height pairing}\label{sec:global}

\subsection{Faltings-Hriljac}\label{S:Gross}
Let $K$ be a global field and let $C /K$ be a smooth, projective, geometrically connected curve of genus
$g>0$ with Jacobian $J = \on{Pic}^0_{C/K}$, and let $D$ and $E$ be degree $0$ divisors on
$C$ with disjoint support. If $v\in M_K$ is a place of $K$, then according to~\cite[III,
\S 5]{Lan88}, the local N\'eron pairing at $v$  satisfies 
\[
  \langle D, \div(f)\rangle_v = -\log|f(D)|_v,
\]
for all rational functions $f \in K(C)^\times$ and  divisors $D\in \Div(C)$ of degree $0$,
with support disjoint from $\div(f)$. Here the absolute values are normalised to satisfy
the product formula and we define $f(D)=\prod_jf(Q_j)^{m_j}$ if $D = \sum m_jQ_j$.
Hence the global N\'eron pairing $\sum_{v \in M_K} \Span{D,E}_v$ does respect linear
equivalence and extends to a symmetric bilinear pairing on the rational points of $J$.

We now relate the global N\'eron pairing to N\'eron-Tate heights.
Write $T$ for the image of $C^{g-1}$ in $\on{Pic}^{g-1}_{C/K}$. Choose a class
$w \in \on{Pic}^{g-1}_{C/K}(\bar{K})$ with $2w$ equal to the canonical class of
$C$ in $\on{Pic}^{2g-2}_{C/K}(K)$. Then the class $\vartheta$ of $T_{-w}$ is a
symmetric ample divisor class on $J_{\bar K}$, and $2\vartheta$ is independent of the choice
of $w$ and is defined over $K$. The following theorem is due to Faltings and Hriljac
\cite{Fal84, Hri85, Gro84}.

\begin{theorem}\label{T:FH}
Let $D$ and $E$ be degree $0$ divisors on $C$ with disjoint support, then
  \[
  \hat h_{2 \vartheta}([D],[E]) = -\sum_{v \in M_K} \Span{D,E}_v. 
  \]
\end{theorem}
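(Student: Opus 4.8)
The plan is to reduce the statement to the known characterization of the Néron–Tate height as the unique (up to the obvious normalizations) quadratic function on $J(\bar K)$ whose associated bilinear form is a height pairing with respect to $2\vartheta$, and then to verify that the global Néron pairing $-\sum_{v} \Span{D,E}_v$ satisfies exactly the defining properties. Concretely, I would proceed as follows.

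\textbf{Step 1: The global pairing descends to $J$ and is bilinear.} The discussion immediately preceding the theorem already records that the local pairings satisfy $\Span{D, \div(f)}_v = -\log|f(D)|_v$, and summing over all $v$ the product formula $\prod_v |f(D)|_v = 1$ gives $\sum_v \Span{D, \div(f)}_v = 0$. Combined with the bilinearity and symmetry of each $\Span{\cdot,\cdot}_v$ (from sections \ref{sec:non_arch} and \ref{sec:arch}), this shows the global pairing factors through linear equivalence in each argument, hence defines a symmetric bilinear form $\langle\,,\,\rangle\colon J(K)\times J(K)\to\R$ (and more generally on $J(\bar K)$, using that each place of $K$ has finitely many extensions to any finite subextension and the local pairings are compatible with base change, cf.\ \cite[III, \S5]{Lan88}).

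\textbf{Step 2: Identify the global pairing as a height pairing for $2\vartheta$.} The key input is the local decomposition of the Néron–Tate height: for a symmetric ample class $c$ on an abelian variety, $\hat h_c(x,y) = -\sum_v \lambda_{c,v}$-type local contributions, where the local terms are local height (Néron) functions. By the defining properties (NF1)–(NF3) of Néron functions recalled in \ref{sec:theta}, together with the fact (stated before the theorem, following \cite{Fal84,Hri85,Gro84}) that $\vartheta$ is represented by $T_{-w}$ with $T$ the image of $C^{g-1}$ in $\Pic^{g-1}$, one checks that the local Néron pairing $\Span{D,E}_v$ of divisor classes on $C$, transported to $J$ via the Abel–Jacobi map, agrees with the local height pairing attached to $2\vartheta$ at $v$. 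At the archimedean places this is precisely the content of \ref{lem:nf1} and the proof of \ref{P:gfformula} (the archimedean local Néron pairing is a pullback of a Néron function for a translate of $\Theta$, i.e.\ a local height function for $\vartheta$); at the non-archimedean places it follows from the construction via the intersection pairing on a regular model together with \cite[Theorem~III.5.2]{Lan88}, which is the arithmetic-surface incarnation of the same local Néron function. Summing over $v$ and invoking the uniqueness of the Néron–Tate height pairing (it is the unique symmetric bilinear form $b$ on $J(\bar K)$ with $b(x,x) = \hat h_{2\vartheta}(x)$, and $\hat h_{2\vartheta}$ is the unique quadratic form differing from a Weil height for $2\vartheta$ by a bounded function) yields $-\sum_v \Span{D,E}_v = \hat h_{2\vartheta}([D],[E])$.

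\textbf{Main obstacle.} The substantive point — and the one I would treat most carefully — is the matching at the non-archimedean places between the intersection-theoretic local Néron pairing $\iota(\ca D+\Phi(D),\ca E+\Phi(E))$ and the local height function for $2\vartheta$, i.e.\ showing that the correction term $\Phi$ (the vertical fibral adjustment making the pairing respect linear equivalence locally) is exactly what is needed to produce a genuine local Néron function on $J$ in the sense of (NF1)–(NF3). This is where the hypotheses that $\ca C$ is a \emph{regular} proper model and that $D,E$ have degree $0$ are essential, and it is the heart of the Faltings–Hriljac theorem; the cleanest route is to cite \cite[III, \S5]{Lan88} (or \cite{Gro84}) for the statement that, with these normalizations, the arithmetic intersection pairing computes the global height, rather than re-deriving it. The archimedean and "product formula" parts are comparatively routine given the material already developed in \ref{sec:arch}.
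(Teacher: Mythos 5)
The paper does not prove this statement: it is the classical theorem of Faltings and Hriljac, and the authors simply cite \cite{Fal84}, \cite{Hri85} and \cite{Gro84}. Your outline (product formula to descend the global pairing to a symmetric bilinear form on $J$, identification of the local N\'eron pairings with N\'eron's local height functions for $2\vartheta$, then uniqueness of the canonical height) is the standard route, and you rightly locate the entire difficulty in the non-archimedean local comparison, which you defer to \cite{Lan88}*{III, \S 5} and \cite{Gro84} --- exactly the references the paper itself relies on, so your proposal is consistent with the paper's treatment.
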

In the following, we assume $K=\Q$ for simplicity. We also assume that every element of $J(\Q)$
can be represented using a $\Q$-rational divisor; this always holds if $C$ has a $\Q_v$
rational divisor of degree 1 for all places $v$ of $\Q$,
see~\cite[Proposition~3.3]{PS97}. This assumption is  convenient, as it allows us to
compute the non-archimedean N\'eron pairings over $\Z_p$. If such representatives do not
exist, we could work over finite extensions.

\begin{remark}
  There is a similar decomposition of the $p$-adic height on $J$ due to
  Coleman-Gross~\cite{CG89}, where the local summand at a non-archimedean prime $v \ne p$
  is the N\'eron pairing at $v$, up to a constant factor, and there is no archimedean
  summand.
  Therefore we only need to combine \ref{alg:NA} with an algorithm to compute the summand
  at $p$, which is defined in terms of Coleman integrals, to get a method for the
  computation of  the $p$-adic height on $J$. This would be interesting, for instance, in the context of
  quadratic Chabauty, see the discussion in \cite[\S1.7]{BDMTV}. For hyperelliptic curves,
  such an algorithm is due to Balakrishnan-Besser~\cite{BB12}.
\end{remark}

\subsection{Finding suitable representatives}
Suppose we are given two points $P$, $Q \in J(\bb Q)$, given by $\Q$-rational degree~$0$ divisors $D$
(resp. $E$) representing $P$ (resp. $Q$), and wish to compute the height
pairing $\hat h_{2 \vartheta}(P,Q)$. 
The local N\'eron pairings are only defined for divisors with disjoint support. 
If $D$ and $E$ have common support, we can move $E$ away from $D$ using strong
approximation, see~\cite[\S4.9.4]{NeurohrPhD}. This algorithm computes a rational function
$f_P$ for $P$ in the common support of both $D$ and $E$ such that $v_P(\div(f_P)) = -1$
and such that $\supp(\div(f_P)) \cap \supp(D) = \{P\}$. We replace $E$ by
$E+\sum_Pv_P(E)\div(f_P)$. 

In practice, the following approach is often simpler: 
reduce multiples of $E$ along a suitable divisor until this yields a divisor $E'$ with support
disjoint from $D$. Due to the bilinearity of the N\'eron
pairings, we can  replace $E$ by $E'$, see also~\cite[\S4.1]{Mul14}.
In both approaches, the bottleneck is the computation of Riemann-Roch spaces~\cite{Hes02}. 
We can also use these methods to ensure that $E$ can be written as the difference of non-special divisors.

\subsection{Identifying relevant primes}
Fix degree~$0$ divisors $D$ and $E$ with disjoint support. A-priori the expression in
\ref{T:FH} is an infinite sum; we must identify a finite set $R$ of `relevant' places
outside which we can guarantee that the local N\'eron pairing of $D$ and $E$ vanishes.
This set $R$ will be the union of three sets; the infinite place, the primes where $C$ has
bad reduction, and another finite set containing the other primes at which $D$ and $E$ meet. 

\subsubsection{Bad primes}
We assume that $C$ is given with an embedding $i\colon C \to \bb P^n_{\bb Q}$ in some
projective space, and we write $\bar C$ for some proper model of $C$ inside $\bb P^n_{\bb
Z}$. For instance, we could always take $n=3$ in practice. The standard affine charts of $\bb P^n_{\bb Z}$ induce an affine cover of $\bar C$,
and we check non-smoothness of $\bar C$ on each chart of the cover separately. Suppose
that a chart of $\bar C$ is given by an ideal $I \triangleleft \bb Z[x_1, \dots, x_n]$,
and $I$ is generated by $f_1, \dots, f_r$. Then a Gr\"obner basis for the jacobian ideal of
$I$ will contain exactly one integer, and its prime factors are exactly those primes over which this affine patch fails to be smooth over $\bb Z$.

\subsubsection{Primes where $D$ and $E$ may meet}\label{sec:primes_meeting}
We reduce to the case where $D$ and $E$ are effective. Then we proceed as above, embedding
$C$ in some projective space, and taking some model $\bar C$. On each affine chart, we
take some proper models $\bar D$ and $\bar E$ of $D$ and $E$. If $\bar C$ is cut out by
$I$, and $\bar D$ and $\bar E$ by ideals $I_D$ and $I_E$, then a Gr\"obner basis for $I +
I_D + I_E$ has exactly one entry that is an integer (we denote it $n_{D,E}$), and again the prime factors of $n_{D,E}$ contain all the primes above which $\bar D$ and $\bar E$ meet. 

 \begin{remark}\label{rem:precision}
The final step in \ref{alg:NA} computes lengths of modules over $\bb Z$. In fact, it is
   much more efficient to work modulo a large power of the prime $\fp$. The techniques
   just described to identify a finite set of relevant primes can also be used to bound
   the required precision. If either of the divisors concerned is supported on the special
   fibre, then it suffices to work modulo $p^n$ where $n$ is the maximum of the
   multiplicities of the components. If both divisors $D$ and $E$ are horizontal, then the
   maximal power of the prime $\fp$ dividing the integer $n_{D,E}$ (defined just above) is an upper bound on the intersection number, and so provides a sufficient amount of $p$-adic precision. Note that resolving singularities by blowing up can only decrease the naive intersection multiplicity, and so this bound is also valid at bad places, as long as the regular model we use is obtained by blowing up $\bar C$. 
\end{remark}

\begin{remark}
The integer $n_{D,E}$ can become very large, even if the equations for $C$, $D$ and $E$ have small coefficients (moving $E$ by linear equivalence often makes the coefficients very much larger). As such, factoring it can become a bottleneck. In principle this factorisation should be avoidable; for example, one can treat the bad primes separately, then one has a global regular model over the remaining primes and the multiplicity can be computed there directly. Algorithms for computing heights on genus 1 and 2 curves without factorisation can be found in \cite{MS16a,MS16b}. 
\end{remark}

\section{Examples}\label{sec:examples}
We have implemented our algorithm in {\tt Magma}. Besides testing it against the code in
{\tt Magma} (based on~\cite{FS97, Sto02, Mul14}) for some hyperelliptic Jacobians, 
we also tested it on a few Jacobians of smooth plane quartics, though the algorithm is by no means limited
to genus 3. At present we can only compute the regulator up to an integral square, because our algorithm only lets us {\em
compute} the N\'eron-Tate height -- we cannot use it to {\em enumerate} points of
bounded N\'eron-Tate height, which would be required for provably determining generators of
$J(\Q)$ with the usual saturation techniques, see the introduction and \cite{Sik95, Sto02}.
If $C$ is hyperelliptic of genus at most 3, then this is possible using the algorithms
discussed in the introduction.
For an Arakelov-theoretic approach to this problem see \cite{Hol14}.

\subsection{A torsion example}\label{sec:tors}

Let $C \colon X^3Y - X^2Y^2 - X^2Z^2 - XY^2Z + XZ^3 + Y^3Z = 0$ in $\mathbb{P}^2_{\Q}$ from \cite[Example~12.9.1]{BPS16}. Its Jacobian is of rank 0 and has 51 rational torsion points. Its bad primes are 29 and 163, but the model over $\Z_{29}$ and $\Z_{163}$ given by the same equation is already regular.

Let $D = D_1 - D_2$ and $E = 3 \cdot E_1 - 3 \cdot E_2$, where $D_1 = (1:0:1)$, $D_2 =
(1:1:0)$, $E_1 = (1:0:0)$ and $E_2 = (1:1:1)$. 
We choose this $E$ rather than $E_1-E_2$ because of the conditions imposed on $E$
in~\ref{sec:theta}.
Then the computations for the intersections
can be done on the affine patch where $X \neq 0$ of $C$. Consider the ring $$R = \Z[y,z] /
(y - y^2 - z^2 - y^2z + z^3 + y^3z),$$ which is regular. The ideals $I_{D_1} = (y, z-1)$
and $I_{3 \cdot E_1} = (y^3, z^3)$ are coprime in $R$, and hence there will be no
intersection between $D_1$ and $E_1$ at any of the non-archimedean places. In the same
way, there is no non-archimedean intersection between $D_1$ and $E_2$, between $D_2$ and
$E_1$, and between $D_2$ and $E_2$. Note that also $\Phi(D)$ and $\Phi(E)$ can be taken to be 0, as the special fibres of the regular models we computed are irreducible. 

For the computation of the archimedean contribution, we first need a canonical divisor which, for practical reasons, has to be supported outside infinity (i.e.\ $X = 0$). For this purpose, we pick $K = \mathop\mathrm{div} ( (z-1)^2 / (y^2z^2) \, dz ).$

Then we use Neurohr's algorithm~\cite{NeurohrPhD} to compute the small period matrix
$\tau$, and $\alpha(D_1), \alpha(D_2)$, $\alpha(E_1), \alpha(E_2),$ and $\alpha(K)$, where $\alpha \colon C(\C) \to J(\C)$ is the embedding whose base point is chosen by Neurohr's algorithm, which turned out to be the point $(1 : -2 : -2.6615...)$ in this case. To find the appropriate divisor $W$ with $2 W = K$ out of the $2^6 = 64$ candidates, we try the 64 candidates for $\alpha(W)$ and compute for which one the function $\theta(z, \tau)$ has a pole at a point $z \in \C^g$ satisfying $j(z) = \alpha(D_1) + \alpha(D_2) - \alpha(W)$ (which is in $\Theta$).
Then we finally compute the expression in \ref{P:gfformula}, and find that the archimedean contribution is approximately 0, or to be more precise, the result was approximately $2 \cdot 10^{-29}$ when computing with 30 decimal digits of precision.

\subsection{An example in rank 1}

Let $C$ be the smooth plane quartic curve over $\Q$ given by $$X^2 Y^2 - X Y^3 - X^3 Z - 2 X^2 Z^2 + Y^2 Z^2 - X Z^3 + Y Z^3 = 0.$$ This is the curve from \cite[Example~12.9.2]{BPS16}. It has rank 1 and trivial rational torsion subgroup. Its bad primes are 41 and 347, but the model over $\Z_{41}$ and $\Z_{347}$ given by the same equation is already regular.

Let $D = D_1 - D_2$ and $E = 3 \cdot E_1 - 3 \cdot E_2$, where $D_1 = (1:0:-1)$, $D_2 =
(1:1:-1)$, $E_1 = (1:1:0)$ and $E_2 = (1:4:-3)$. The computations for the intersections
can be done on the affine patch of $C$ where $X \neq 0$. Consider the ring $$R = \Z[y,z] / (y^2 - y^3 - z - 2z^2 - y^2z^2 - z^3 - yz^3).$$ The sum of the two ideals $I_{D_1} = (y, z+1)$ and $I_{E_2} = (y - 4, z + 3)$ inside $R$ is $(2,y,z+1)$. Hence, the only place where $D_1$ and $E_2$ could possibly intersect is the prime 2.
At 2, the length of $\Z_{(2)}[y,z] / (2, y, z+1) \cong \F_2$ as $R_{(2)}$-module is 1, so $\iota(D_1, E_2) = \log(2)$. There is no intersection between $D_1$ and $E_1$, between $D_2$ and $E_1$, and between $D_2$ and $E_2$. Moreover, $\Phi(D)$ and $\Phi(E)$ can be taken to be 0 again. Hence, the intersection pairing $\langle D, E \rangle_{\mathfrak{p}}$ equals $-3\log(2)$ if $\mathfrak{p} = (2)$, and 0 otherwise.

We computed the archimedean contribution in the same way as in the previous example, and
we found it to be $-0.013563$. Hence, the N\'eron-Tate height pairing is $\hat{h}_{2\vartheta}([D], [E]) =  2.0930$.

We performed an analogous computation for the points $F = (0:1:0) - D_2$, and $G = 3\cdot
E_2 - 3 \cdot (0:1:-1)$, and found that $\hat{h}_{2\vartheta}([F], [G]) = -0.59966$. We
computed this with 30 decimal digits of precision, and found numerically that $-414 \cdot
\hat{h}_{2\vartheta}([D],[E]) = 1445 \cdot \hat{h}_{2\vartheta}([F], [G])$. We deduced that $g = [E] - [F]$ is a possible generator for the Mordell-Weil group, and the relation between the heights suggested the relations $[D] = 17 \cdot g$ , $[E] = 255 \cdot g$, $[F] = -69 \cdot g$, and $[G] = 18 \cdot g$, which we confirmed in the Mordell-Weil group. If $g$ is indeed the generator of the Mordell-Weil group, then the regulator is $0.00048282$.

\subsection{The split Cartan modular curve of level 13}
Let $C$ denote the smooth plane quartic curve given by the equation
\begin{equation}\label{E:cartan_model}
  (-Y-Z)X^3 +(2Y^2 +YZ)X^2 +(-Y^3 +Y^2Z -2YZ^2 +Z^3)X+(2Y^2 Z^2 -3YZ^3 )=0.
\end{equation}
By work of Baran~\cite{Bar14a, Bar14b} this curve is isomorphic 
to the modular curve $X_{s}(13)$ which classifies elliptic curves whose
Galois representation is contained in a normaliser of a split Cartan subgroup of
$\GL_2(\F_{13})$, as well as its non-split counterpart $X_{ns}(13)$. 
Assuming the Generalised Riemann Hypothesis, Bruin-Poonen-Stoll~\cite[Example~12.9.3]{BPS16} prove that $J(\Q)$ has
rank~3; an unconditional proof is given in~\cite{BDMTV}. By a result of Balakrishnan, Dogra, Tuitman, Vonk and the third-named author~\cite{BDMTV}, there are
precisely 7 rational points on $C$. Using reduction modulo small primes,
Bruin-Poonen-Stoll show that the points
\[
  P_0 \coloneqq (1:0:0),\, P_1 \coloneqq (0:1:0), \, P_2 \coloneqq (0:0:1), \, P_3
  \coloneqq (-1:0:1) \in C(\Q)
\]
have the property that
\[
  [P_1-P_0], [P_2-P_0], [P_3 - P_0] 
\]
on the Jacobian $J$ of $C$ generate a subgroup $G$ of $J(\Q)$ of rank 3, which contains all differences of rational
points.  Therefore the regulator of $J/\Q$ differs from the regulator of $G$ multiplicatively by an integral
square.

The height pairings that we obtain by using our code are:
\begin{center}
\begin{tabular}{|c|ccc|} \hline
	&$[P_1-P_0]$	&$[P_2-P_0]$	&$[P_3-P_0]$	\\ \hline
$[P_1-P_0]$	&0.78401	&0.59540	&0.32516	\\
$[P_2-P_0]$	&0.59540	&0.98372	&0.37437	\\
$[P_3-P_0]$	&0.32516	&0.37437	&0.18861	\\ \hline
\end{tabular}
\end{center}

Hence, the regulator is $9.6703 \cdot 10^{-3}$ up to an integral square factor.

The work of Gross-Zagier~\cite{GZ86} and Kolyvagin-Logachev~\cite{KL89} implies that the rank part of BSD holds in this
example, that the Shafarevich-Tate group is finite, and that the full conjecture of Birch
and Swinnerton-Dyer holds up to an integer. 
We give numerical evidence that it holds up to an integral square.
This is the first non-hyperelliptic example where the BSD invariants (except the order of
the Shafarevich-Tate group) have been computed; for hyperelliptic examples see
\cite{FLSSSW, vB17}.

In \cite[Example~12.9.3]{BPS16}, it is already shown that $J$ has no non-trivial rational torsion. It is verified easily that the model in $\Z$ given by the same equation as in \ref{E:cartan_model} is regular at all primes. Hence, all Tamagawa numbers equal 1.
For the value of the $L$-function, we use that $J$ is isogenous to the abelian variety
$A_f$ associated to a newform $f \in S_2(\Gamma_0(169))$ with Fourier coefficients in
$\Q(\zeta_7)^+$. Hence we have $$L(J,s) = \prod_\sigma L(f^{\sigma}, s),$$ where $\sigma$ runs
through $\Gal(\Q(\zeta_7)^+/\Q)$. Computing the factors on the right hand side using {\tt
Magma}, we obtained $\lim_{s \rightarrow 1} L(J,s) \cdot (s-1)^{-3} \approx 0.76825$.

For the real period, we used the code of Neurohr to compute a big period matrix $\Lambda$
for $J$. One can then apply the methods of the first-named author
\cite[Algorithm~13]{vB17} to check that the differentials used for the computation of
the big period matrix are 3 times a set of generators for the canonical sheaf. Hence, the
real period is $\frac1{27}$ times the covolume of the lattice generated by the 6 columns
of $\Lambda + \overline{\Lambda}$ inside $\R^3$. We computed the real period to be $79.444$ and
checked that this value agrees with the real volume of $A_f$.

Assuming our value for the regulator is correct, the BSD formula predicts that the size of
the Shafarevich-Tate group is $\frac{0.76825}{9.6703 \cdot 10^{-3} \cdot 79.444} \approx
1.0000$, which is consistent with the result of \cite{PS99} proving that the size
of the group is a square in this case, if it is finite.

\subsection{An example with very bad reduction}

In all the examples we tried so far, the naive model over $\bb Z$ happened to be regular. We wanted to try an curve where this was far from the case, but still with Jacobian of positive rank. We searched for a curve with some rational points, and very bad reduction at a small prime, finding the genus 3 curve $C$ over $\Q$ given by $$3x^3y + 5 xy^2z + 5y^4 - 1953125z^4 = 0,$$
 with rational points $P_1 = (1:0:0)$ and $P_2 = (0:25:1)$. The bad primes are $3$, $5$, $17$, $358166959$, $523687087967$. For the three largest prime factors, the na\"ive models are already regular. The special fibre of the regular model produced by {\tt Magma} over the prime $3$ has 4 irreducible components, with multiplicities $[1,1,2,2]$, and intersection matrix

\begin{equation*}
\left[
\begin{matrix}
-6&0&2&1\\
 0 &-2&0&1\\
 2&0&-2&1\\
 1&1&1&-2\\
\end{matrix}
\right]. 
\end{equation*}

That over the prime 5 has 9 components, with multiplicities $[1, 1, 1, 1, 1, 1, 2, 3, 3]$ and intersection matrix
\begin{equation*}
\left[
\begin{matrix}
-1&0&1&0&0&0&0&0&0\\
 0&-4&0&0&0&1&0&0&1\\
 1&0&-2&0&0&1&0&0&0\\
 0&0&0&-3&1&0&1&0&0\\
 0&0&0&1&-2&1&0&0&0\\
 0&1&1&0&1&-3&0&0&0\\
0&0&0&1&0&0&-2&0&1\\
0&0&0&0&0&0&0&-1&1\\
 0&1&0&0&0&0&1&1&-2\\
\end{matrix}
\right]. 
\end{equation*}

We define a degree~$0$ divisor $D = P_1 - P_2$, and compute the height pairing of $D$ with itself, obtaining
\begin{equation*}
\hat{h}_{2\vartheta}(D,D) \approx 3.2107. 
\end{equation*}
In particular, this shows that $D$ is not torsion on the Jacobian, hence the rank is at least 1 (probably, it equals 1) and the regulator is probably 3.2107, though of course there might exist a generator of smaller height. 

The computation took around 5 minutes, with $90\%$ of this time spent on the saturation step (\ref{lem:saturation}). Each saturation carried out took around 1.5 seconds, but the complexity of the reduction types meant that many such steps were necessary.

\begin{bibdiv}
\begin{biblist}

\bib{BB12}{article}{
   author={Balakrishnan, Jennifer S.},
   author={Besser, Amnon},
   title={Computing local $p$-adic height pairings on hyperelliptic curves},
   journal={Int. Math. Res. Not. IMRN},
   date={2012},
   number={11},
   pages={2405--2444},
}

\bib{BDMTV}{misc}{
   author={Balakrishnan, Jennifer S.},
   author={Dogra, Netan},
   author={M\"uller, J. Steffen},
   author={Tuitman, Jan},
   author={Vonk, Jan},
   title={  Explicit Chabauty-Kim for the Split Cartan Modular Curve of Level 13},
   note={Preprint, {\url{https://arxiv.org/abs/1711.05846}}}
}

\bib{Bar14a}{article}{
   author={Baran, Burcu},
   title={An exceptional isomorphism between modular curves of level 13},
   journal={J. Number Theory},
   volume={145},
   date={2014},
   pages={273--300},
}

\bib{Bar14b}{article}{
   author={Baran, Burcu},
   title={An exceptional isomorphism between level 13 modular curves via
   Torelli's theorem},
   journal={Math. Res. Lett.},
   volume={21},
   date={2014},
   number={5},
   pages={919--936},
}

\bib{vB17}{misc}{
  author={van Bommel, Raymond},
   title={ Numerical verification of the Birch and Swinnerton-Dyer conjecture for
   hyperelliptic curves of higher genus over $\mathbb{Q}$ up to squares},
   note={Preprint, {\url{https://arxiv.org/abs/1711.10409}}},
}

\bib{BPS16}{article}{
   author={Bruin, Nils},
   author={Poonen, Bjorn},
   author={Stoll, Michael},
   title={Generalized explicit descent and its application to curves of
   genus 3},
   journal={Forum Math. Sigma},
   volume={4},
   date={2016},
   pages={e6, 80},
}

\bib{CG89}{article}{
   author={Coleman, Robert F.},
   author={Gross, Benedict H.},
   title={$p$-adic heights on curves},
   conference={
      title={Algebraic number theory},
   },
   book={
      series={Adv. Stud. Pure Math.},
      volume={17},
      publisher={Academic Press, Boston, MA},
   },
   date={1989},
   pages={73--81},
}
\bib{Dok18}{article}{
   author = {{Dokchitser}, Tim},
    title = {Models of curves over DVRs},
  journal = {ArXiv e-prints},
   eprint = {1807.00025},
     year = {2018}
}

\bib{Fal84}{article}{
   author={Faltings, Gerd},
   title={Calculus on arithmetic surfaces},
   journal={Ann. of Math. (2)},
   volume={119},
   date={1984},
   number={2},
   pages={387--424},
}

\bib{FLSSSW}{article}{
   author={Flynn, E. Victor},
   author={Lepr\'evost, Franck},
   author={Schaefer, Edward F.},
   author={Stein, William A.},
   author={Stoll, Michael},
   author={Wetherell, Joseph L.},
   title={Empirical evidence for the Birch and Swinnerton-Dyer conjectures
   for modular Jacobians of genus 2 curves},
   journal={Math. Comp.},
   volume={70},
   date={2001},
   number={236},
   pages={1675--1697},
}

\bib{FS97}{article}{
   author={Flynn, E. V.},
   author={Smart, N. P.},
   title={Canonical heights on the Jacobians of curves of genus $2$ and the
   infinite descent},
   journal={Acta Arith.},
   volume={79},
   date={1997},
   number={4},
   pages={333--352},
}

\bib{Gro84}{article}{
   author={Gross, Benedict H.},
   title={Local heights on curves},
   conference={
      title={Arithmetic geometry},
      address={Storrs, Conn.},
      date={1984},
   },
   book={
      publisher={Springer, New York},
   },
   date={1986},
   pages={327--339},
}

\bib{GZ86}{article}{
   author={Gross, Benedict H.},
   author={Zagier, Don B.},
   title={Heegner points and derivatives of $L$-series},
   journal={Invent. Math.},
   volume={84},
   date={1986},
   number={2},
   pages={225--320},
}

\bib{Hes02}{article}{
   author={Hess, F.},
   title={Computing Riemann-Roch spaces in algebraic function fields and
   related topics},
   journal={J. Symbolic Comput.},
   volume={33},
   date={2002},
   number={4},
   pages={425--445},
}

\bib{HS00}{book}{
   author={Hindry, Marc},
   author={Silverman, Joseph H.},
   title={Diophantine geometry},
   series={Graduate Texts in Mathematics},
   volume={201},
   note={An introduction},
   publisher={Springer-Verlag, New York},
   date={2000},
   pages={xiv+558},
   isbn={0-387-98975-7},
   isbn={0-387-98981-1},
   review={\MR{1745599}},
   doi={10.1007/978-1-4612-1210-2},
}
\bib{MN19}{article}{
  author={Molin, Pascal},
  author={Neurohr, Christian},
   title={Computing period matrices and the Abel-Jacobi map of superelliptic curves},
   journal={Math. Comp.},
    date={2019},
   volume={89},
   pages={847--888},
}

\bib{Hol12}{article}{
   author={Holmes, David},
   title={Computing N\'eron-Tate heights of points on hyperelliptic Jacobians},
   journal={J. Number Theory},
   volume={132},
   date={2012},
   number={6},
   pages={1295--1305},
}

\bib{Hol14}{article}{
   author={Holmes, David},
   title={An Arakelov-theoretic approach to na\"\i ve heights on hyperelliptic
   Jacobians},
   journal={New York J. Math.},
   volume={20},
   date={2014},
   pages={927--957},
}

\bib{Hri85}{article}{
   author={Hriljac, Paul},
   title={Heights and Arakelov's intersection theory},
   journal={Amer. J. Math.},
   volume={107},
   date={1985},
   number={1},
   pages={23--38},
}		

\bib{Joh17}{article}{
  author = {Johansson, F.},
  title = {Arb: efficient arbitrary-precision midpoint-radius interval arithmetic},
  journal = {IEEE Transactions on Computers},
  year = {2017},
  volume = {66},
  number = {8},
  pages = {1281--1292},
  doi = {10.1109/TC.2017.2690633},
}

\bib{KL89}{article}{
   author={Kolyvagin, V. A.},
   author={Logach\"ev, D. Yu.},
   title={Finiteness of the Shafarevich-Tate group and the group of rational
   points for some modular abelian varieties},
   language={Russian},
   journal={Algebra i Analiz},
   volume={1},
   date={1989},
   number={5},
   pages={171--196},
   translation={
      journal={Leningrad Math. J.},
      volume={1},
      date={1990},
      number={5},
      pages={1229--1253},
   },
}
\bib{Lan88}{book}{
   author={Lang, Serge},
   title={Introduction to Arakelov theory},
   publisher={Springer-Verlag, New York},
   date={1988},
   pages={x+187},
   isbn={0-387-96793-1},
}

\bib{Lan83}{book}{
   author={Lang, Serge},
   title={Fundamentals of Diophantine geometry},
   publisher={Springer-Verlag, New York},
   date={1983},
   pages={xviii+370},
   isbn={0-387-90837-4},
}

\bib{Liu02}{book}{
   author={Liu, Qing},
   title={Algebraic geometry and arithmetic curves},
   series={Oxford Graduate Texts in Mathematics},
   volume={6},
   note={Translated from the French by Reinie Ern\'e;
   Oxford Science Publications},
   publisher={Oxford University Press, Oxford},
   date={2002},
   pages={xvi+576},
   isbn={0-19-850284-2},
}

\bib{Mul14}{article}{
   author={M\"uller, J. Steffen},
   title={Computing canonical heights using arithmetic intersection theory},
   journal={Math. Comp.},
   volume={83},
   date={2014},
   number={285},
   pages={311--336},
}

\bib{MS16a}{article}{
   author={M\"uller, J. Steffen},
   author={Stoll, Michael},
   title={Computing canonical heights on elliptic curves in quasi-linear
   time},
   journal={LMS J. Comput. Math.},
   volume={19},
   date={2016},
   number={suppl. A},
   pages={391--405},
}

\bib{MS16b}{article}{
   author={M\"uller, J. Steffen},
   author={Stoll, Michael},
   title={Canonical heights on genus-2 Jacobians},
   journal={Algebra Number Theory},
   volume={10},
   date={2016},
   number={10},
   pages={2153--2234},
}

\bib{Tata1}{article}{
	Author = {Mumford, David},
	Publisher = {Birkh{\"a}user},
	Title = {{Tata lectures on theta I}},
	date = {1983}
}

\bib{Ner65}{article}{
   author={N\'{e}ron, A.},
   title={Quasi-fonctions et hauteurs sur les vari\'{e}t\'{e}s ab\'{e}liennes},
   language={French},
   journal={Ann. of Math. (2)},
   volume={82},
   date={1965},
   pages={249--331},
   issn={0003-486X},
   review={\MR{0179173}},
   doi={10.2307/1970644},
}
	
\bib{NeurohrPhD}{thesis}{
  author={Neurohr, Christian},
   title={Efficient integration on Riemann surfaces \& applications},
   date={2018},
   organization={Carl von Ossietzky Universit\"at Oldenburg},
   type={PhD thesis},
   note={\url{http://oops.uni-oldenburg.de/3607/1/neueff18.pdf}},
}

\bib{PS97}{article}{
   author={Poonen, Bjorn},
   author={Schaefer, Edward F.},
   title={Explicit descent for Jacobians of cyclic covers of the projective
   line},
   journal={J. Reine Angew. Math.},
   volume={488},
   date={1997},
   pages={141--188},
}

\bib{PS99}{article}{
   author={Poonen, Bjorn},
   author={Stoll, Michael},
   title={The Cassels-Tate pairing on polarized abelian varieties},
   journal={Ann. of Math. (2)},
   volume={150},
   date={1999},
   number={3},
   pages={1109--1149},
}

\bib{Sik95}{article}{
   author={Siksek, Samir},
   title={Infinite descent on elliptic curves},
   journal={Rocky Mountain J. Math.},
   volume={25},
   date={1995},
   number={4},
   pages={1501--1538},
}

\bib{Sil88}{article}{
   author={Silverman, Joseph H.},
   title={Computing heights on elliptic curves},
   journal={Math. Comp.},
   volume={51},
   date={1988},
   number={183},
   pages={339--358},
}

\bib{Sto02}{article}{
   author={Stoll, Michael},
   title={On the height constant for curves of genus two. II},
   journal={Acta Arith.},
   volume={104},
   date={2002},
   number={2},
   pages={165--182},
}

\bib{Sto17}{article}{
   author={Stoll, Michael},
   title={An explicit theory of heights for hyperelliptic Jacobians of genus
   three},
   conference={
      title={Algorithmic and experimental methods in algebra, geometry, and
      number theory},
   },
   book={
      publisher={Springer, Cham},
   },
   date={2017},
   pages={665--715},
}

\end{biblist}
\end{bibdiv}

\end{document}